\begin{document}
\baselineskip = 16pt

\newcommand \ZZ {{\mathbb Z}}
\newcommand \NN {{\mathbb N}}
\newcommand \RR {{\mathbb R}}
\newcommand \PR {{\mathbb P}}
\newcommand \AF {{\mathbb A}}
\newcommand \GG {{\mathbb G}}
\newcommand \QQ {{\mathbb Q}}
\newcommand \CC {{\mathbb C}}
\newcommand \bcA {{\mathscr A}}
\newcommand \bcC {{\mathscr C}}
\newcommand \bcD {{\mathscr D}}
\newcommand \bcF {{\mathscr F}}
\newcommand \bcG {{\mathscr G}}
\newcommand \bcH {{\mathscr H}}
\newcommand \bcM {{\mathscr M}}
\newcommand \bcI {{\mathscr I}}
\newcommand \bcJ {{\mathscr J}}
\newcommand \bcK {{\mathscr K}}
\newcommand \bcL {{\mathscr L}}
\newcommand \bcO {{\mathscr O}}
\newcommand \bcP {{\mathscr P}}
\newcommand \bcQ {{\mathscr Q}}
\newcommand \bcR {{\mathscr R}}
\newcommand \bcS {{\mathscr S}}
\newcommand \bcV {{\mathscr V}}
\newcommand \bcU {{\mathscr U}}
\newcommand \bcW {{\mathscr W}}
\newcommand \bcX {{\mathscr X}}
\newcommand \bcY {{\mathscr Y}}
\newcommand \bcZ {{\mathscr Z}}
\newcommand \goa {{\mathfrak a}}
\newcommand \gob {{\mathfrak b}}
\newcommand \goc {{\mathfrak c}}
\newcommand \gom {{\mathfrak m}}
\newcommand \gon {{\mathfrak n}}
\newcommand \gop {{\mathfrak p}}
\newcommand \goq {{\mathfrak q}}
\newcommand \goQ {{\mathfrak Q}}
\newcommand \goP {{\mathfrak P}}
\newcommand \goM {{\mathfrak M}}
\newcommand \goN {{\mathfrak N}}
\newcommand \uno {{\mathbbm 1}}
\newcommand \Le {{\mathbbm L}}
\newcommand \Spec {{\rm {Spec}}}
\newcommand \Gr {{\rm {Gr}}}
\newcommand \Pic {{\rm {Pic}}}
\newcommand \Jac {{{J}}}
\newcommand \Alb {{\rm {Alb}}}
\newcommand \Corr {{Corr}}
\newcommand \Chow {{\mathscr C}}
\newcommand \Sym {{\rm {Sym}}}
\newcommand \Prym {{\rm {Prym}}}
\newcommand \cha {{\rm {char}}}
\newcommand \eff {{\rm {eff}}}
\newcommand \tr {{\rm {tr}}}
\newcommand \Tr {{\rm {Tr}}}
\newcommand \pr {{\rm {pr}}}
\newcommand \ev {{\it {ev}}}
\newcommand \cl {{\rm {cl}}}
\newcommand \interior {{\rm {Int}}}
\newcommand \sep {{\rm {sep}}}
\newcommand \td {{\rm {tdeg}}}
\newcommand \alg {{\rm {alg}}}
\newcommand \im {{\rm im}}
\newcommand \gr {{\rm {gr}}}
\newcommand \op {{\rm op}}
\newcommand \Hom {{\rm Hom}}
\newcommand \Hilb {{\rm Hilb}}
\newcommand \Sch {{\mathscr S\! }{\it ch}}
\newcommand \cHilb {{\mathscr H\! }{\it ilb}}
\newcommand \cHom {{\mathscr H\! }{\it om}}
\newcommand \colim {{{\rm colim}\, }} 
\newcommand \End {{\rm {End}}}
\newcommand \coker {{\rm {coker}}}
\newcommand \id {{\rm {id}}}
\newcommand \van {{\rm {van}}}
\newcommand \spc {{\rm {sp}}}
\newcommand \Ob {{\rm Ob}}
\newcommand \Aut {{\rm Aut}}
\newcommand \cor {{\rm {cor}}}
\newcommand \Cor {{\it {Corr}}}
\newcommand \res {{\rm {res}}}
\newcommand \red {{\rm{red}}}
\newcommand \Gal {{\rm {Gal}}}
\newcommand \PGL {{\rm {PGL}}}
\newcommand \Bl {{\rm {Bl}}}
\newcommand \Sing {{\rm {Sing}}}
\newcommand \spn {{\rm {span}}}
\newcommand \Nm {{\rm {Nm}}}
\newcommand \inv {{\rm {inv}}}
\newcommand \codim {{\rm {codim}}}
\newcommand \Div{{\rm{Div}}}
\newcommand \CH{{\rm{CH}}}
\newcommand \sg {{\Sigma }}
\newcommand \DM {{\sf DM}}
\newcommand \Gm {{{\mathbb G}_{\rm m}}}
\newcommand \tame {\rm {tame }}
\newcommand \znak {{\natural }}
\newcommand \lra {\longrightarrow}
\newcommand \hra {\hookrightarrow}
\newcommand \rra {\rightrightarrows}
\newcommand \ord {{\rm {ord}}}
\newcommand \Rat {{\mathscr Rat}}
\newcommand \rd {{\rm {red}}}
\newcommand \bSpec {{\bf {Spec}}}
\newcommand \Proj {{\rm {Proj}}}
\newcommand \pdiv {{\rm {div}}}
\newcommand \wt {\widetilde }
\newcommand \ac {\acute }
\newcommand \ch {\check }
\newcommand \ol {\overline }
\newcommand \Th {\Theta}
\newcommand \cAb {{\mathscr A\! }{\it b}}

\newenvironment{pf}{\par\noindent{\em Proof}.}{\hfill\framebox(6,6)
\par\medskip}

\newtheorem{theorem}[subsection]{Theorem}
\newtheorem{conjecture}[subsection]{Conjecture}
\newtheorem{proposition}[subsection]{Proposition}
\newtheorem{lemma}[subsection]{Lemma}
\newtheorem{remark}[subsection]{Remark}
\newtheorem{remarks}[subsection]{Remarks}
\newtheorem{definition}[subsection]{Definition}
\newtheorem{corollary}[subsection]{Corollary}
\newtheorem{example}[subsection]{Example}
\newtheorem{examples}[subsection]{examples}

\title{Bloch's conjecture on surfaces of general type with an involution}
\author{Kalyan Banerjee}

\address{Indian Institute of Science Education and Research, Mohali, India}

\email{kalyanb@iisermohali.ac.in}

\begin{abstract}
In this short note we prove that the Bloch's conjecture holds for a surface of general type of numerical Godeaux  type or some class of numerical Campedelli type,  with geometric genus zero equipped with an involution, when the quotient of the surface by the involution is a rational surface.
\end{abstract}
\maketitle

\section{Introduction}

One of the most important problems inn algebraic geometry is to compute the Chow groups for higher codimensional cycles on a smooth projective variety. The theorem due to Mumford in \cite{M}, proves that the Chow group of zero cycles is infinite dimensional provided that the geometric genus of the surface is greater than zero, in the sense that the natural maps from symmetric powers to the Chow group are never surjective. The converse is a conjecture due to Bloch, that if we have a smooth projective complex surface of geometric genus zero then the Chow group of zero cycles is isomorphic to the group of integers.
This theorem has been proved for surfaces not of general type with geometric genus zero due to \cite{BKL}. The case for surfaces of general type that is surfaces of Kodaira dimension 2 with geometric genus zero is still open. Some examples of such surfaces for which the Bloch's conjecture holds are due to \cite{B}, \cite{IM}, \cite{PW},\cite{V},\cite{VC}. This paper concerns the proof of Bloch' conjecture for some examples of surfaces of general type with geometric genus zero equipped with an involution. There is a generalization of the Bloch's conjecture which says that if we have an involution on a surface of general type which acts as identity on the space of global two forms then such an involution acts as identity on the group of algebraically trivial zero cycles on the surface modulo rational equivalence (denoted as $A_0(S)$). This is the departing point of proving  the Bloch's conjecture on a surface of general type with an involution. The technique due to \cite{Voi}, tells us that if we have  such an involution on a K3 surface then the involution acts as identity on the $A_0$ of the K3 surface. This has also been proved for some examples of K3 surfaces due to \cite{GT},\cite{HK}. So we prove that the involution acts as identity on the group $A_0(S)$, for some examples of surfaces  $S$ of general type with geometric genus equal to zero, such that the quotient is rational and equipped with an involution. Since the  quotient surface $S/i$ is rational, it has trivial $A_0$ proving that the involution is acting as $-1$ on $A_0(S)$. These two informations together tell us that the group $A_0(S)$ is trivial by the Roitman's torsion theorem \cite{R2}.

The main difference of our approach to the Bloch's conjecture on a surface of general type with an involution with that of Voisin present in \cite{Voi}, is the mild generalisation of the monodromy argument, for the rest of the arguments we mainly follow the idea of \cite{Voi} and \cite{BKL}. The argument present in \cite{Voi} involves the monodromy of a Lefschetz pencil over the field of complex numbers, whereas we present the monodromy technique over an arbitrary uncountable algebraically closed ground field, so that we can get rid of the analytic argument and obtain an arithmetic variant of it. These arithmetic monodromy first appears in \cite{Del1}, in the proof of Weil conjectures and a very elaborated explanation of it is written in \cite{FK}. This mild generalizations enables us to understand the Bloch's conjetcure over uncountable algebraically closed fields.

So we arrive at the following main result in this paper:
\begin{theorem}
Let $S$ be a numerical Godeaux surface over the field of complex numbers with an involution $i$ on it such that the quotient of $S$ by the involution is a rational surface with effective anticanonical divisor, such that the branch locus either consists of a unique irreducible component of genus $2$, and the other components of the branch locus are rational, then the Bloch's conjecture holds for this surface $S$.
\end{theorem}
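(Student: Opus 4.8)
The plan is to reduce the theorem to the single assertion that the involution acts as the identity on $A_0(S)$, and then to read off the conclusion from the formal bookkeeping already indicated in the introduction. Since $S$ is a numerical Godeaux surface we have $p_g(S) = h^0(S,\Omega^2_S) = 0$ and $q(S) = 0$; in particular $i^*$ acts trivially on $H^0(S,\Omega^2_S) = 0$ for the trivial reason that this space vanishes, so the hypothesis of the generalized Bloch conjecture holds automatically and the whole content is its conclusion. Granting $i^* = \id$ on $A_0(S)$, I would pair it with the opposite relation $i^* = -\id$: writing $\pi \colon S \lra Y := S/i$ for the quotient double cover, one has $\pi^*\pi_* = \id + i^*$, and since $Y$ is rational $A_0(Y) = 0$, so $(\id + i^*)z = \pi^*\pi_* z = 0$ for every $z \in A_0(S)$. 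The two computations of $i^*$ together give $2\,A_0(S) = 0$; as $q(S) = 0$ the Albanese vanishes, so Roitman's torsion theorem shows $A_0(S)$ is torsion-free, whence $A_0(S) = 0$. There is no circularity here: the two evaluations of $i^*$ are proved by independent means, and their coexistence is exactly what forces $A_0(S)$ to be $2$-torsion.

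The heart of the matter is therefore the Voisin-type statement that the anti-invariant part $A_0(S)^-$, on which $i^* = -1$, vanishes, and this is where I would concentrate. First I would produce an $i$-invariant covering family of curves on $S$ by pulling back a Lefschetz pencil of suitably ample curves $\{\ol C_t\}_{t \in \PR^1}$ on the rational surface $Y$; the preimages $C_t := \pi^{-1}(\ol C_t)$ form an $i$-invariant family whose members sweep out $S$, each $C_t \lra \ol C_t$ being the double cover branched along $\ol C_t \cap B$. On a general smooth fiber the invariant part of $H^1(C_t)$ is pulled back from $H^1(\ol C_t)$, while the anti-invariant part $H^1(C_t)^-$ is the tangent space to the Prym variety $\Prym(C_t/\ol C_t)$, and correspondingly $A_0(C_t)^- \cong \Prym(C_t/\ol C_t)$. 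A class in $A_0(S)^-$ can be spread out over the base into a section of the resulting family of Pryms via a relative Abel–Jacobi map, and this section is invariant under the monodromy of the pencil. The decisive input is that the monodromy representation on the anti-invariant vanishing cohomology $H^1(C_t)^-_{\mathrm{van}}$ has no nonzero invariants: by the Picard–Lefschetz theory of the pencil (and its étale-cohomological avatar used throughout this paper) this follows once the monodromy action is shown to be irreducible, and it forces the Abel–Jacobi invariant to be torsion, hence, by Roitman applied to the Prym family, to vanish. Because the curves $C_t$ cover $S$, spreading this out yields $A_0(S)^- = 0$, i.e. $i^* = \id$ on $A_0(S)$.

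The main obstacle, and the precise place where the hypotheses on the branch locus enter, is the verification of this monodromy condition in the presence of the genus-$2$ branch component. The rational components of $B$ contribute only rational curves to the ramification locus and hence nothing to $A_0$ or to the invariant cohomology, so they are harmless; the danger is that the unique genus-$2$ component $B_0$ produces a genus-$2$ piece of the ramification whose first cohomology could survive as a monodromy-invariant class inside $H^1(C_t)^-_{\mathrm{van}}$ and obstruct the vanishing above. I would need to arrange the pencil so that the relevant anti-invariant classes, including those carried by $B_0$, lie in the vanishing cohomology and are acted on irreducibly by the monodromy; the effectivity of $-K_Y$ is what secures the existence of an $i$-invariant pencil positive enough for Picard–Lefschetz to apply while keeping the intersections $\ol C_t \cap B$ generic, and the ``genus $2$, other components rational'' hypothesis is exactly calibrated so that this anti-invariant vanishing cohomology admits no monodromy invariants. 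Once that irreducibility is established, the remainder is the formal spreading-out sketched above, and the theorem follows.
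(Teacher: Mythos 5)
Your outer skeleton agrees with the paper: prove $i_*=\id$ on $A_0(S)$, get $i_*=-\id$ from $\pi^*\pi_*=\id+i_*$ and the rationality of the quotient, conclude $2\,A_0(S)=0$, and finish with Roitman's torsion theorem (with $q=0$ making $A_0(S)$ torsion-free). The gap is in the heart of your argument, the claim that $A_0(S)^-=0$ follows because ``the monodromy representation on $H^1(C_t)^-_{\mathrm{van}}$ has no nonzero invariants\ldots\ it forces the Abel--Jacobi invariant to be torsion.'' Absence of monodromy invariants in the fiberwise cohomology of an abelian fibration does \emph{not} force sections of that fibration to be torsion: by the Lang--N\'eron theorem (theorem of the fixed part) it only gives that the Mordell--Weil group is finitely generated. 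A concrete counterexample to your implication: a rational elliptic surface obtained from a generic pencil of plane cubics has irreducible monodromy with no invariants on the vanishing cohomology of its fibers, yet its Mordell--Weil rank is $8$. So the decisive step of your proposal fails as stated; to rescue it one needs an extra input that you never supply --- either the Hodge-theoretic machinery of infinitesimal invariants of normal functions (Green--Voisin), or, as the paper does, the prior statement that the image of $(\Delta_S-\Gr(i))_*$ is \emph{finite dimensional in the sense of Roitman}. In the paper, monodromy (in its \'etale, Picard--Lefschetz form) is used only to prove that $K(C)=\ker(J(C)\to\Alb(S))$ is a \emph{simple} abelian variety for a very general member of a Lefschetz pencil; simplicity plus a Mumford--Roitman countability argument and the inequality $\dim K(C)>\dim W$ (which is exactly where finite dimensionality enters) show that the correspondence kills $K(C)$ and hence factors through the trivial Albanese. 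Your proposal has no substitute for the finite-dimensionality step, and monodromy alone cannot replace it.

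Relatedly, you have misplaced where the hypotheses on the branch locus act. They are not ``calibrated so that the anti-invariant vanishing cohomology admits no monodromy invariants'' --- Picard--Lefschetz irreducibility holds for any Lefschetz pencil regardless of the branch curve. In the paper they enter the \emph{dimension count} proving finite dimensionality: one maps $S''^{\,g+n}$ to the Prym fibration $\bcP$ of the pulled-back linear system and needs the fibers to be positive dimensional, i.e. $n-m+1-\Gamma\cdot L/2>0$; this rests on the intersection-theoretic estimate $(B-2\Gamma)\cdot L>0$, deduced from the relation $B-\Gamma=-2K_\Sigma$ of Calabri--Ciliberto--Mendes Lopes and Bezout's theorem for the quartic branch component in $\PR^2$, while effectivity of $-K_\Sigma$ is what allows $n$ (hence the number of points one can impose) to be taken large. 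An iteration with ample divisors then traps the cycles in a symmetric power $S''^{\,m_0}$ with $m_0<g+n$, which is the finite-dimensionality statement. A correct write-up must reproduce both this count and the factorization-through-Albanese proposition; neither is present, even in outline, in your proposal. (A smaller omission: since $i$ has isolated fixed points, $S/i$ is singular and $\pi^*\pi_*=\id+i_*$ should be applied after blowing them up, as the paper does.)
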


Also the Bloch's conjecture holds for certain examples of numerical Campedelli surfaces with an involution. Namely for those such that the quotient is rational or elliptic with all the components of the branch locus being rational.

{\small \textbf{Acknowledgements:} The author would like to thank the hospitality of IISER-Mohali and Tata Institute Mumbai, for hosting this project. The author is indebted to V. Srinivas and N.Fakhruddin for constructive criticism about the proof of the main theorem of this paper.}

\section{Finite dimensionality in the sense of Roitman and \'etale monodromy}

Let us recall the finite dimensionality in the sense of Roitman. Let $X$ be a smooth projective variety and let $A_0(X)$ be the group of algebraically trivial zero cycles on $X$ modulo the rational equivalence. A subgroup $P$ of $A_0(X)$ is said to be finite dimensional in the sense of Roitman, if there exists a smooth projective variety $W$, and a correspondence $\Gamma$ of the correct codimension on $W\times X$, such that P is contained in the set $\Gamma_*(W)$. Let $Z$ be a correspondence supported on $S\times S$, $S$ is a smooth projective surface, then following Voisin \cite{Voi} we have the following proposition.

\begin{proposition}
Suppose that the image of $Z_*:A_0(S)\to A_0(S)$ is finite dimensional. Then $Z_*$ factors through the albanese map $alb_S:A_0(S)\to Alb(S)$.
\end{proposition}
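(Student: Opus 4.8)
The plan is to reduce the statement to the assertion that $Z_*$ annihilates the Albanese kernel $T(S) := \ker(alb_S : A_0(S) \to Alb(S))$. Since $alb_S$ is surjective with kernel $T(S)$, a factorization $Z_* = \bar{Z} \circ alb_S$ through a homomorphism $\bar{Z} : Alb(S) \to A_0(S)$ exists if and only if $Z_*$ is constant on the fibres of $alb_S$, that is, if and only if $Z_*(T(S)) = 0$. So the whole problem becomes: show that $Z_*$ kills every Albanese-trivial class.

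First I would record the functoriality of the Albanese under correspondences: the induced map $Z_*^{Alb} : Alb(S) \to Alb(S)$ satisfies $alb_S \circ Z_* = Z_*^{Alb} \circ alb_S$, because the Albanese is a covariant functor on cycles (it factors through $H_1$, and Albanese images of zero-cycles vary algebraically in families, so $alb_S \circ Z_*$ sees only the Albanese class of its argument). Consequently, for $t \in T(S)$ one has $alb_S(Z_* t) = Z_*^{Alb}(alb_S t) = 0$, so $Z_*$ preserves the Albanese kernel, $Z_*(T(S)) \subseteq T(S)$. Combining with the trivial inclusion $Z_*(T(S)) \subseteq \im(Z_*)$ gives
\[
Z_*(T(S)) \subseteq \im(Z_*) \cap T(S).
\]

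The crux is then to show that $\im(Z_*) \cap T(S)$ is a torsion group, and this is where the finite-dimensionality hypothesis enters through Roitman's structure theory. Writing $P := \im(Z_*)$, by hypothesis there is a smooth projective $W$ and a correspondence $\Gamma$ on $W \times S$ of the correct codimension with $P \subseteq \Gamma_*(W)$, so that $P$ is the image of a single algebraic family of zero-cycles parametrized by $W$ and is closed under the group law. The restriction $alb_S|_P : P \to Alb(S)$ is a regular homomorphism, and since $alb_S$ is the universal regular homomorphism from $A_0(S)$ to an abelian variety, the weak representability of the finite-dimensional subgroup $P$ forces $alb_S|_P$ to be injective modulo torsion: the universal regular quotient of the representable group $P$ is $P$ itself up to torsion, and it is carried isomorphically modulo torsion onto its image in $Alb(S)$. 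In other words, any Albanese-trivial class that lies in the finite-dimensional subgroup $P$ must already be torsion, so $\ker(alb_S|_P) = P \cap T(S)$ is torsion. (This is precisely the localization to the subgroup $P$ of the global Roitman theorem asserting that finite-dimensionality of $A_0$ forces $alb_S$ to be an isomorphism.) Finally, by Roitman's torsion theorem \cite{R2} the Albanese map is an isomorphism on torsion, so $T(S)$ is torsion-free; hence $P \cap T(S)$ is at once torsion and torsion-free, forcing $P \cap T(S) = 0$, and therefore $Z_*(T(S)) = 0$.

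I expect the only genuine obstacle to be the middle step. By Mumford's theorem, an arbitrary algebraic family of Albanese-trivial zero-cycles is in general very far from torsion, so it is essential that it is the \emph{entire subgroup} $P = \im(Z_*)$ — closed under addition and covered by one parametrizing variety — that is assumed finite-dimensional, rather than a single point-difference family; only this stronger representability makes $alb_S|_P$ injective modulo torsion. All the remaining steps are formal manipulations with the functoriality of $alb_S$ and the torsion-freeness of $\ker(alb_S)$.
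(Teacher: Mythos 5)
Your reduction (factoring through $alb_S$ is equivalent to $Z_*(T(S))=0$), the functoriality of the Albanese under correspondences, and the use of Roitman's torsion theorem to get torsion-freeness of $T(S)$ are all fine and consistent with the paper's framework. But the step you yourself flag as the crux --- that any subgroup $P\subseteq A_0(S)$ which is finite dimensional in Roitman's sense (i.e.\ $P\subseteq \Gamma_*(W)$ for a single smooth projective $W$) has $P\cap T(S)$ torsion, because ``the universal regular quotient of the representable group $P$ is $P$ itself up to torsion'' --- is false in the generality in which you use it, and no proof of it is offered beyond the appeal to representability. Concrete counterexample: let $S$ be a smooth quintic surface in $\PR^3$ (so $p_g=4$, $q=0$, hence $Alb(S)=0$ and $T(S)=A_0(S)$), and let $C\subset S$ be a smooth connected very ample curve of genus $g$ containing the support of some nonzero class $z\in A_0(S)$ (such $z$ exists and is non-torsion, since $A_0(S)$ is nonzero by Mumford and torsion-free by Roitman). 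Then $P:=j_*(J(C))$, with $j:C\hookrightarrow S$, is a subgroup of $A_0(S)$; it equals $\Gamma'_*(\Sym^g C)$ for the correspondence $\Gamma'$ on $\Sym^g C\times S$ sending $D$ to $j_*(D)-g\,[j(p_0)]$, so it is finite dimensional in exactly the sense of the definition. Yet $alb_S|_P$ is the zero map while $P\ni z$ is non-torsion. So Roitman's global theorem does \emph{not} localize to subgroups: its proof uses that the symmetric powers of $S$ surject onto all of $A_0(S)$, a structure a subgroup $\Gamma_*(W)$ simply does not have.

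This is not a repairable presentation issue; the missing content is precisely what the paper's proof (following Voisin) supplies. The hypothesis that $P=\im(Z_*)$ is the image of a correspondence acting on all of $A_0(S)$ is used as follows: the auxiliary variety $W$ is \emph{fixed}, while the hyperplane section $C$ can be chosen of arbitrarily high genus, so that $\dim K(C)>\dim W$, where $K(C)=\ker(J(C)\to Alb(S))$. One then considers $R=\{(k,w)\in K(C)\times W \mid Z_*j_*(k)=\Gamma_*(w)\}$, which by the Mumford--Roitman argument is a countable union of Zariski closed subsets; surjectivity onto $K(C)$ plus the dimension gap forces some fiber over a point $w\in W$ to be positive dimensional, and then the simplicity of $K(C)$ for very general $C$ --- established by the (\'etale) monodromy / Picard--Lefschetz lemma, which is the technical heart of the paper's proof --- forces the homomorphism $Z_*j_*$ to vanish on all of $K(C)$. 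Finally a blow-up argument supports an arbitrary Albanese-trivial cycle on such a $K(C)$, giving $Z_*(T(S))=0$. Note that in my counterexample above the dimension comparison fails ($\dim \Sym^g C=g\geq \dim K(C)$), which is exactly why no conclusion can be drawn there; your proposed lemma erases this dimension bookkeeping, and with it the actual content of the proposition.
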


\begin{proof}
The proof of this is same as in \cite{Voi} but we present it for the sake of completeness of the argument. The only variant present in our proof would be the presence of \'etale monodromy instead of usual monodromy of the topological fundamental group.

Since the image of $Z_*$ is finite dimensional, there exists a smooth projective variety $W$ and a correspondence $\Gamma$ on $W\times S$, such that image of $Z_*$ is contained in $\Gamma_*(W)$. Consider a very ample line bundle $L$ on $S$, and embed $S$ into the corresponding projective space. Let $C$ be a smooth hyperplane section of $S$ inside this projective space. We can take some multiple of $L$, so that the hyperplane section $C$ is of high genus. So that the kernel of the map from $J(C)$ to $Alb(S)$ denoted by $K(C)$ has dimension greater than $W$. Now we prove the following lemma.

\begin{lemma}
The abelian variety $K(C)$ is simple for a very general $C$.
\end{lemma}

\begin{proof}
This also follows from generalizing the argument of \cite{CV}[corollary 1.2]. We include our proof for the convenience of the reader. Let $A$ be a non-trivial proper abelian subvariety of $K(C)$ for a  very general $C$. Then $A$ is defined over $k$, so we can extend the scalars to $k(t)$, and consider $A_{k(t)}$. Attaching the co-efficients of the defining equation of $A_{k(t)}$ and $K(C)_{k(t)}$ we have $A_{k(t)}, K(C)_{k(t)}$ defined over a finite extension $L$ of $k(t)$  inside $\overline{k(t)}$. Let us choose a Lefschetz pencil $D$  on $S$, such that its geometric generic fiber is $C$. Then let $D'$ be a smooth projective curve mapping finitely onto $D$ and having the function field equal to $L$. Now spread $A_{k(t)},K(C)_{k(t)}$ over $U'$ Zariski open in $D'$ and let us denote the spreads by $\bcA,\bcK$ respectively. Throwing out more points from $U'$ we get that the morphisms $\bcA, \bcK \to U'$ are smooth and proper. Let these two morphisms be $f,g$. Then consider the constant sheaves corresponding to the pre-sheaves given by $\ZZ/l^n \ZZ$ on $\bcA,\bcK$ respectively. Since the morphisms $f,g$ are smooth and proper, we have that $R^if_*\ZZ/l^n\ZZ, R^ig_*\ZZ/l^n\ZZ$ are locally constant \'etale sheaves on $U'$. Hence taking a geometric point $\eta'$ over the geometric point corresponding to $C$ (let us denote it by $\eta$), we have that $\pi_1(U',\eta')$ acts on the stalk of the above sheaves,
$$(R^if_*\ZZ/l^n\ZZ)_{\eta'}, (R^ig_*\ZZ/l^n\ZZ)_{\eta'}$$
 respectively. So cosidnering $i=1$, we get that $\pi_1(U',\eta')$ acts on
$$H^1_{\'et}(A_{\eta'},\ZZ/l^n \ZZ), H^1_{\'et}(K(C),\ZZ/l^n \ZZ)$$

 Now the map $A_{\eta'}$ to $K(C)$ gives rise to a surjection at the level of Tate modules of the corresponding ableian varieties. So taking dual we have that $H^1_{\'et}(A_{\eta'},\ZZ_l)$ is embedded into $H^1_{\'et}(K(C),\ZZ_l)$, and this map is a map of $\pi_1$-modules because it is induced by a map of the corresponding locally conatant sheaves. Now $H^1_{\'et}(K(C),\QQ_l)$ is isomorphic to the kernel of $H^1_{\'et}(C,\QQ_l)\to H^3_{\'et}(S,\QQ_l)$, because the isomorphism of $H^1_{\'et}$ of an ableian variety with the dual of its Tate module is functorial. So now we have an action of $\pi_1(U',\eta')$ on $H^1_{\'et}(A_{\eta'},\QQ_l)$, which is embedded in $H^1_{\'et}(K(C),\QQ_l)$. Also for the image of $U'$ in $D$, denoted by $U$, we have that $\pi_1(U,\eta)$ ($\eta$ is the geometric point corresponding to $C$), acts irreducibly on $H^1_{\'et}(K(C),\QQ_l)$ by the Picard Lefschtez formula (More details about Picard-Lefschetz formula and the irreducibility of the monodromy action can be found in \cite{FK}[chapter III] and \cite{Del1}[Corollary 5.5]). Now it is a consequence of the Picard Lefschetz formula and the fact that $\pi_1(U',\eta')$ gives rise to a finite index subgroup of $\pi_1(U,\eta)$ that $H^1_{\'et}(A_{\eta'},\QQ_l)$ is $\pi_1(U,\eta)$-equivariant. Hence we have that $H^1_{\'et}(A_{\eta'},\QQ_l)$ is either $H^1_{\'et}(K(C),\QQ_l)$ or zero. Now note that $A_{\eta'}$ is $A$, so we have that the Tate module of $A$ is isomorphic to the Tate module of $K(C)$ or its Tate module is trivial. Therefore $A$ is isogenous to $K(C)$ or to the trivial abelian variety. Hence $K(C)$ is simple.
\end{proof}

Next let us consider the subset

$$R=\{(k,w)\in K(C)\times W|Z_*j_*(k)=\Gamma_*(w)\}$$
then by Mumford-Roitman argument as in \cite{M},\cite{R}, we have that $R$ is a countable union of Zariski closed subsets of the product. Since image of $Z_*$ is finite dimensional we have that $R$ maps surjectively onto $K(C)$. So there exists a component $R_0$ of $R$ which maps surjectively onto $K(C)$. So we have
$$\dim(R_0)\geq \dim(K(C))> \dim(W)\;.$$
Let $w$ be a closed point on $W$, then the fiber of the projection over $w$ is of dimension greater or equal than one. Denote the fiber by $F_w$, since it is of dimension greater or equal than one, it generates the whole abelian variety $K(C)$. So it follows that $Z_*$ vanishes on $K(C)$.

Next, let us consider a cycle $z=z^+-z^-$ on $S$ of degree zero. That will correspond to a tuple $(s_1,\cdots,s_{2k})$ on $S^{2k}$. We blow up $S$ at these points and obtain $S'$ and a map $\tau:S'\to S$. Let $E_i$ be the exceptional divisor on $S'$ corresponding to $s_i$. Let $H$ be a very ample divisor on $S$ such that $\tau^*(H)-n\sum_i E_i$ is ample on $S'$. Then consider a sufficiently high multiple of $L$, which is very ample, the for any curve $C$ in the corresponding linear system we have that $\tau(C)$ contains the points $s_i$. Now let $alb_S(z)=0$, that would imply that any lift of $z$ say $z'$ on $C$, will go to zero under the albanese map of $S'$. So then we have $z'$ supported on $K(C)$. We can repeat the arguments as above for the correspondence $Z'=Z\circ \tau$, it is finite dimensional, hence $Z'_*=0$ on $K(C)$, which in turn implies that $Z_*(z)=Z'_*(z')=0$. So we have that $Z_*$ factors through the albanese map of $S$.
\end{proof}

Now we consider the correspondence $Z=\Delta_S-Graph(i)$. An we prove that the image of $Z_*$ is finite dimensional.

\begin{theorem}
\label{theorem2}
Let $S$ be a numerical Godeaux surface having an involution $i$ on it. Let $S/i$ be a rational surface. Let the fixed locus of $i$ contains a unique irreducible component of genus $2$.  Let us consider the correspondence $\Delta_S-Gr(i)$ on $S\times S$. Then the image of the push-forward induced by this correspondence is finite dimensional.
\end{theorem}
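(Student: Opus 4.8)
The plan is to identify $\im(Z_*)$ with the anti-invariant part of $A_0(S)$ and then to exhibit it inside a single finite-dimensional family of Jacobians produced from the rational quotient. Since $Z_* = \id - i_*$, for a zero cycle $z$ we have $i_* Z_*(z) = i_* z - z = -Z_*(z)$, so $\im(Z_*)$ lies in the subgroup $A_0(S)^- = \{\alpha \in A_0(S) : i_*\alpha = -\alpha\}$; conversely $Z_*$ is multiplication by $2$ on $A_0(S)^-$. Moreover $\im(Z_*)$ is generated by the elementary cycles $x - i(x)$ for $x \in S$ (recall that on a connected smooth projective surface every degree-zero cycle is already algebraically trivial). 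Hence it suffices to find one smooth projective $W$ and a correspondence on $W \times S$ whose image contains all the classes $x - i(x)$.

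First I would use that $T = S/i$ is rational. Let $\pi : S \to T$ be the quotient and $R \subset S$ the ramification curve, mapping isomorphically onto the branch curve $B \subset T$; by hypothesis $R$ has a single component $R_2$ of genus $2$, the remaining components being rational. If $i$ has isolated fixed points I would first blow them up: this leaves $A_0(S)$ unchanged, turns each isolated fixed point into a pointwise-fixed rational curve, and makes the quotient a smooth rational surface, so that $A_0(T) = 0$ is available and $\pi$ is a double cover ramified along a smooth curve. Choosing a Lefschetz pencil inside a sufficiently ample linear system on $T$ and pulling it back along $\pi$, I obtain a pencil $\{\tilde D_b\}_{b \in \PR^1}$ of curves on $S$, each the double cover $\tilde D_b \to D_b$ with deck transformation $i|_{\tilde D_b}$, and these curves sweep out $S$.

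The key observation is fibrewise: for a point $x$ lying on a smooth member $\tilde D_b$, the degree-zero divisor $x - i(x)$ defines a class in $\Jac(\tilde D_b) = \Pic^0(\tilde D_b)$ which is anti-invariant, hence lies in the Prym variety $\Prym(\tilde D_b / D_b) = \ker(\Nm)^0$, and whose image under the push-forward $\Jac(\tilde D_b) \to A_0(S)$ is precisely the class of $x - i(x)$. The nontrivial part of these Pryms is controlled by the genus-$2$ ramification $R_2$, the rational components of $R$ contributing nothing. I would then assemble the $\Jac(\tilde D_b)$ into the relative Jacobian $\bcJ \to \PR^1$ of the pencil, a finite-dimensional family of (semi-)abelian varieties; after compactifying and passing to a suitable fibre power to accommodate sums of points, this yields a smooth projective $W$ together with the correspondence $\Gamma$ coming from the universal divisor on $\bcJ \times S$. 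Since every point of $S$ lies on some member of the pencil and $\im(Z_*)$ is generated by the cycles $x - i(x)$, the resulting algebraic subgroup contains $\im(Z_*)$, which is therefore finite-dimensional in the sense of Roitman.

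The main obstacle I expect is the passage from this fibrewise statement to a genuine global correspondence on a finite-dimensional parameter space: one must control the base locus of the pulled-back pencil, handle the finitely many singular (nodal) members over the discriminant where $\Jac$ degenerates to a semi-abelian variety, and check that the relative Jacobian extends and that $\Gamma$ realizes the classes $x - i(x)$ compatibly across the family. The reductions at the isolated fixed points, together with the resulting smooth rational model of $T$ and the description of the branch locus as one genus-$2$ curve plus rational curves, are exactly what make the construction both possible and bounded; the same genus-$2$ geometry is what feeds the simplicity and monodromy input of the preceding proposition when one later concludes that $Z_*$ factors through $\Alb(S)$.
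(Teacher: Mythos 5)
There is a genuine gap at the heart of your argument: the reduction ``it suffices to find one smooth projective $W$ and a correspondence on $W\times S$ whose image contains all the classes $x-i(x)$'' is false. Finite dimensionality in the sense of Roitman requires the whole subgroup $\im(Z_*)$ to be contained in the \emph{set} $\Gamma_*(W)=\{\Gamma_*(w)\,:\,w\in W\}$, and this set is not a group. Knowing that each generator $x-i(x)$ equals some $\Gamma_*(w)$ says nothing about a general element $\sum_{j=1}^{N}\pm\bigl(x_j-i(x_j)\bigr)$, where the $x_j$ lie on $N$ different members of your pencil and $N$ is unbounded; your parenthetical fix, ``passing to a suitable fibre power,'' does not repair this, because the fibre power needed grows with $N$, so you never obtain a single fixed $W$. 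That this reduction cannot be formal is shown by Mumford's theorem itself: for any surface, $A_0(S)$ is generated by the classes $x-y$, and all of these lie in $\Gamma_*(S\times S)$ for the obvious correspondence $\bigl(\pr_{13}^*\Delta_S-\pr_{23}^*\Delta_S\bigr)$ on $(S\times S)\times S$; yet $A_0(S)$ is infinite dimensional whenever $p_g>0$. So containing the generators in a finite-dimensional family is strictly weaker than finite dimensionality of the group, and the entire difficulty of the theorem lies in the step you have skipped.

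The paper's proof is built precisely to close this gap, and it does so with two ingredients absent from your proposal. First, it works not with a pencil but with the full linear system $|L|$ on the blown-down quotient, with $h^0(\Sigma,L)=g+n$, so that a general tuple of $g+n$ points of $S''$ lies over a \emph{single} member $C\in|L|$; the map $(s_1,\dots,s_{g+n})\mapsto\sum_j\bigl(s_j-i(s_j)\bigr)$ from $S''^{g+n}$ to $A_0(S'')$ then factors through the Prym fibration $\bcP(\wt{\pi'\bcC}/\pi'\bcC)$ over $|L|$. Second, it makes the dimension count quantitative: the genus-$2$ hypothesis on the branch component enters through the Bezout inequality $(B-2\Gamma)\cdot L>0$, which yields $n-\Gamma\cdot L/2>m$ and hence
$\dim\bcP(\wt{\pi'\bcC}/\pi'\bcC)=2g+n-1+m+\Gamma\cdot L/2<2(g+n)=\dim S''^{g+n}$,
so the fibres of the factorization are positive dimensional. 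Voisin's ample-divisor iteration then moves any cycle, within its fibre, to one supported on $S''^{m_0}$ with $m_0<g+n$ fixed, and an induction gives the stabilization $\Gamma_*(S''^{m})=\Gamma_*(S''^{m_0})$ for \emph{all} $m\geq g+n$; this stabilization is exactly what produces a single correspondence $\Gamma'$ on $S''^{2m_0}\times S''$ with $\im(Z_*)\subseteq\Gamma'_*(S''^{2m_0})$. Your fibrewise Prym observation (which the paper shares) is the easy part; without the full linear system, the dimension count in which the genus-$2$ curve plays a numerical role, and the support-reduction induction, the construction does not prove finite dimensionality.
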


\begin{proof}

The fixed locus of the involution on $S$ is the union of a reducible curve $R$ and finitely many isolated points such that

$$R=Z_1\cup \cdots Z_n\cup \Gamma$$
where $Z_i$'s are $-2$ rational curves and $\Gamma$ is smooth of genus $2$.  Let us blow up the surface $S$ at these finitely many  isolated fixed points to get a surface $S'$. $S'$ carries a natural involution on it. If we quotient by this involution we have the desingularization of the quotient surface $S/i$, which is rational. Then the branch locus of the involution on $S'$ is a union of $R$ and $\cup_i E_i$, where $E_i$'s are the exceptional curves of the Blow-up. Consider the blow down of $S'$ along the rational components of $R$ and along $E_i$'s. Call this surface $S''$. There is an involution $i$ on $S''$ such that the quotient is the blow down of $S'/i$ along the rational components of the fixed locus of the involution on $S'$.  Then we take a very ample line bundle $L$ on the surface $S'/i=\Sigma$. Let the genus of a curve in the linear system of $L$ be $g$. We consider the exact sequence of sheaves
$$0\to\bcO(C)\to \bcO(\Sigma)\to \bcO(\Sigma)/\bcO(C)\to 0$$
then tensoring with $\bcO(-C)$ we get that
$$0\to \bcO(\Sigma)\to \bcO(-C)\to \bcO(-C)|_C\to 0\;.$$
This gives rise to the exact sequence of global section of sheaves
$$0\to \CC\to H^0(\Sigma, L)\to H^0(C,L|_C)\to 0$$
this sequence is exact on the right because the irregularity of $\Sigma$ is zero. Now by the ampleness of $L$, we have that the degree of $L|_C$ is greater than zero, so we have by Riemann-Roch
$$\dim(H^0(C,L|_C))=g+n-1\;.$$

Here  is $n$ large by using the fact that the rational surfaces occur are having anticanonical divisor effective \cite{CCL}[proposition 3.9 and remark 4.13], also see \cite{H}[introduction for examples of anticanonical rational surfaces] and we can choose $L$ to be of sufficiently large degree.

Hence the dimension of $H^0(\Sigma,L)$ is equal to $g+n$.

Now for any $C$ in the linear system of $L$, we have its pre-image on $S'$, say $\wt{C}$ is a two sheeted cover of $C$. It is smooth for a general $C$ and connected by the Hodge index theorem. Suppose that $C_1,C_2$ be two components of $\wt{C}$, such that $C_1^2>0,C_2^2>0$ and $C_1.C_2=0$, because $\wt{C}$ is smooth. But then by the Hodge index theorem we have that the intersection form restricted to $C_1,C_2$ is semipositive, which can only happen when $C_1$ is proportional to $C_2$, hence we have that $C_1^2=0=C_2^2$, which further gives $(C_1+C_2)^2=0$, contradicting the ampleness of $\wt{C}$.
So we have that $\wt{C}$ is a connected double cover of $C$. But this double cover could be ramified along the intersection of $\wt{C}$ with the branch locus $B=R\cup \cup_i E_i$ of the involution. By \cite{CCL} we have that the number

$$(2K_{\Sigma}+B).L\geq 0$$
for $L$ ample. Consider the unique component $\Gamma$ of $R$ which is of genus $2$. By \cite{CCL} proposition $4.5$ and corollary 4.8 we have $B-\Gamma=-2K_{\Sigma}$. So we have
$$B-2\Gamma=-2K_{\Sigma}-\Gamma\;.$$
To prove that
$$(-2K_{\Sigma}-\Gamma).L>0$$
we have to prove that
$$(B-2\Gamma).L>0\;.$$
But this could be achieved since the curve $\Gamma$ on $S$ is coming from a quartic curve on $P^2$, which is a component of the branch locus of the double over of $\PR^2$. The branch locus of the double cover of $\PR^2$ we consider is a degree $10$ curve having a quartic curve as a irreducible component.

So consider the map $\pi: S\to \PR^2$, consider $\pi(R),\pi(\Gamma)$ on $\PR^2$. By Bezout's theorem
$$(\pi(B)-\pi(2\Gamma)).L>0$$
for $L$ a very ample line bundle on $\PR^2$. Let us consider the blow up of $\PR^2$ along finitely many points to achieve $\Sigma$. Let $E'_i$'s are the exceptional curves of the blow up. This gives us that
$$(B-2\Gamma).(\pi^*L-\sum_i E'_i)=\pi^*(\pi(B)-\pi(2\Gamma)).\pi^*(L)=(\pi(B)-\pi(\Gamma)).L>0\;.$$
Therefore choosing $L$ to be multiple of the pull-back of a very ample line bundle on $\PR^2$, we get the required inequality.

\medskip

Let us consider a general point in $S''^{g+n}$, say $(s_1,\cdots,s_{g+n})$.
Let $(\sigma'_1,\cdots,\sigma'_{g+n})$ be the image of this point in $\Sigma'=S''/i$ under the quotient map. Then there exists a unique curve $C$ in $L$ such that its image under the blow down  contains all the points $\sigma'_i$, and its double cover contains $s_i$ for all $i$. Then the image of $\sum_i s_i-\sum_i i(s_i)$, depends on the element
$$alb_{\wt{C}}(\sum_i s_i-i(s_i))$$
which is an element in  the Prym variety of the double cover $\wt{\pi'(C)}\to \pi'(C)$. Here $\pi'$ is the blow down map from $\Sigma$ to $\Sigma'$. So  the map from $S''^{g+n} $ to $A_0(S'')$ factors through the Prym fibration $\bcP(\wt{\pi'{\bcC}}/\pi'{\bcC})$, where $\pi'{\bcC}$ is the universal family of curves in $\Sigma'$ parametrized by the linear system $|L|$, and $\wt{\pi'{\bcC}}$ is its universal double cover. The genus of the double cover of $\pi'(C)$ for a general $C$ is equal to
$$g'=2g-1+m+\Gamma.L/2$$
here $m$ is the number of points obtained by blowing down the rational curves and
$$\Gamma.L$$
is the intersection number of $\pi'(\wt{C})$ with $\Gamma$ (this is obtained by projection formula). So the dimension of the prym variety of the corresponding double cover is
$$g-1+m+ \Gamma.L/2\;.$$

 Hence the dimension of $\bcP(\wt{\pi'{\bcC}}/\pi'{\bcC})$ is equal to $g-1+g+n+m+\Gamma.L/2=2g+n-1+m+\Gamma.L/2$, whereas the dimension of $S''^{g+n}$ is $2g+2n$. So the fibers of the map
$$S''^{g+n}\to \bcP(\wt{\bcC}/\bcC)$$
are of dimension
$$2g+2n-(2g+n-1+m+\Gamma.L/2)=n-m+1-\Gamma.L/2$$.

Consider $n$ such that $n-\Gamma.L/2>m$ and by previous calculation we know that
$$n-\Gamma.L/2>0\;.$$
SO the fibers of the above map are positive dimensional. Hence it contains a curve $F_s$. Now the linear system corresponding to $5K_S$ is very ample\cite{Bo}(since $S$ is minimal. Consider $D=5K_{S'}-(\sum_i E_i)=\pi^*(5K_S)$, and $K_{S}^2$ lies in between $1$ to $2$. So we have the genus of the curves $D$ in the linear system of $5K_{S'}-\sum_i E_i$, a fixed number equal to
$$\pi^*(25K_S^2)+\pi^*(5K_S).(\pi^*(K_S)+\sum_i E_i)=25K_S^2+5K_S^2=2g-2.$$

So $$g=15K_S^2+1\;,$$ which gives us that $g=31$ or $16$ and that is
less than $g+n$. Consider the divisor $\pi'(D)$ on $S''$, it is ample by the Nakai-Moisezhon criterion and by the projection formula.
$\sum_i \pr_i^{-1}(\pi'(D))$ on the product $S''^{g+n}$. This divisor is ample so it intersects $F_s$, so we get that there exist points in $F_s$ which are  in $\pi'(D)\times S''^{g+n-1}$ where $D$ is in the linear system of $L$. Then consider the elements of $S''^{g+n}$ of the form $(c,s_1,\cdots,s_{g+n-1})$, where $c$ belongs to $\pi'(D)$. Then such a given $(c,s_1,\cdots,s_{g+n-1})$ lies on a unique $\wt{\pi'(C)}.$  Consider the Zariski open  subset of $S''^{g+n}$, consisting of $(c,s_1,\cdots,s_{g+n-1})$ where all $s_i$'s are distinct. Project it down to $S''^{g+n-1}$.  Then we can consider  the map from $S''^{g+n-1}$ to $A_0(S'')$ given by
$$(s_1,\cdots,s_{g+n-1})\mapsto alb_{\wt{C}}(\sum_i s_i-\sum_i i(s_i))\;,$$
we see that this map factors through the Prym fibration and the map from $S''^{g+n-1}$ to $\bcP(\wt{\pi'(\bcC)}/\pi'(\bcC))$ has positive dimensional fibers, since $n$ is large. So it means that, if we consider an element $(s_1,\cdots,s_{g+n-1})$, such that $(c,s_1,\cdots,s_{g+n-1})$ is in $F_s$ and a curve through it, then it intersects the ample divisor given by $\sum_i \pr_i^{-1}(\pi'(D))$, on $S''^{g+n-1}$. Then we have some of $s_i$ is contained in $C$. So iterating this process we get that, there exists elements of $F_s$ that are supported on $\pi'(D^k)^k\times S''^{g+n-k}$, where $k$ is some natural number depending on $n$. But the genus of $\pi'(D)$ is fixed and is less than or equal to $16$ or $31$. So if we choose $n$ large enough, then $k$ will be large and greater than the genus of $\pi'(D)$. This means that the elements of $F_s$ are supported on $\pi'(D)^{n_0}\times S''^{g+n-k}$, where $n_0$ is the genus of $\pi'(D)$. Then all we have is that elements of $F_s$ are supported on $S''^{m_0}$, where $m_0$ is strictly lesser than $g+n$.

 Hence we have that the cycle on $F_s$ is supported on $S''^{m_0}$, where $m_0$ is strictly less than $g+n$. So we have that $\Gamma_*(S''^{m_0})=\Gamma_*(S''^{g+n})$, where $\Gamma=\Delta_S''-Graph(i)$. This proves that the image of $\Gamma_*$ is finite dimensional by the following argument.

Now we prove by induction that $\Gamma_*(S''^{m_0})=\Gamma_*(S''^m)$ for all $m\geq g+n$.
So suppose that $\Gamma_*(S''^k)=\Gamma^*(S''^{m_0})$ for $k\geq g+n$, then we have to prove that $\Gamma_*(S''^{k+1})=\Gamma_*(S''^{m_0})$. So any element in $\Gamma_*(S''^{k+1})$ can be written as  $\Gamma_*(s_1+\cdots+s_{m_0})+\Gamma_*(s)$. Now let $k-m_0=l$, then $m_0+1=k-l+1$. Since $k-l<k$, we have $k-l+1\leq k$, so $m_0+1\leq k$, so we have the cycle
$$\Gamma_*(s_1+\cdots+s_{m_0})+\Gamma_*(s)$$
supported on $S''^k$, hence on $S''^{m_0}$. So we have that $\Gamma_*(S''^{m_0})=\Gamma_*(S''^k)$ for all $k$ greater or equal than $g+n$. Now any element $z$ in $A_0(S'')$, can be written as a difference of two effective cycle $z^+,z^-$ of the same degree. Then we have
$$\Gamma_*(z)=\Gamma_*(z^+)-\Gamma_*(z_-)$$
and $\Gamma_(z_{\pm})$ belong to $\Gamma_*(S''^{m_0})$. So let $\Gamma'$ be the correspondence on $S''^{2m_0}\times S''$ defined as
$$\sum_{l\leq m_0}(pr_i,pr_{S''})^*\Gamma-\sum_{m_0\leq l\leq 2m_0}(pr_i,pr_{S''})^* \Gamma$$
where $\pr_i$ is the $i$-th projection from $S''^{m_0}$ to $S''$, and $\pr_{S''}$ is from $S''^i\times S''$ to the last copy of $S''$. Then we have
$$\im(\Gamma_*)=\Gamma'_*(S''^{2m_0})\;.$$

Therefore the image of $(\Delta_{S''}-\Gr(i_{S''}))_*$ has finite dimensional image in the sense of Roitman.  Since the image of  $(\Delta_{S''}-\Gr(i_{S''}))_*$ is mapped surjectively onto the image of $(\Delta_{S'}-\Gr(i_{S'}))_*$, it has finite dimensional image implying that $(\Delta_S-i_S)_*$ has finite dimensional image.




\end{proof}
Since the irregularity of $S$ is zero we have that,
\begin{theorem}
The involution $i_*$ acts as identity on the group $A_0(S)$.
\end{theorem}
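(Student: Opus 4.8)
The plan is to combine the two preceding results and exploit the vanishing of the albanese variety. First I would set $Z = \Delta_S - \Gr(i)$, so that the induced map on zero cycles is
$Z_* = \id - i_*$,
the identity coming from the diagonal and $i_*$ from the graph of the involution. By Theorem \ref{theorem2} the image of $Z_*$ is finite dimensional in the sense of Roitman, since $S$ is a numerical Godeaux surface whose fixed locus contains a unique genus-$2$ component and whose quotient is rational — exactly the hypotheses imposed there.

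Next I would invoke the Proposition: finite dimensionality of the image of $Z_*$ forces $Z_*$ to factor through the albanese map $alb_S \colon A_0(S) \to Alb(S)$. Here enters the one numerical input special to these surfaces. A numerical Godeaux surface has $p_g = q = 0$, so the irregularity vanishes and $\dim Alb(S) = q(S) = 0$; thus $Alb(S)$ is the trivial abelian variety and $alb_S$ is the zero map. Since $Z_*$ factors through $alb_S$, it follows that $Z_* = 0$ on $A_0(S)$.

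Finally I would read off the conclusion: $Z_* = \id - i_* = 0$ on $A_0(S)$ says precisely that $i_*$ acts as the identity, which is the assertion of the theorem. (I note that this is the first of the two facts that, together with $i_*$ acting as $-1$ via the rationality of the quotient, will yield the triviality of $A_0(S)$ through Roitman's torsion theorem.)

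The \emph{main obstacle} is not in this concluding deduction, which is a short assembly, but in securing the two ingredients already in hand: Theorem \ref{theorem2}, whose proof carries the geometric content through the Prym-fibration dimension count, and the Proposition, whose factorisation through the albanese relies on the étale-monodromy simplicity argument. Granting those, the only points to verify directly are the identification $Z_* = \id - i_*$ and the vanishing $Alb(S) = 0$, both immediate from the definitions and from the numerical Godeaux hypothesis.
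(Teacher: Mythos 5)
Your proposal is correct and follows exactly the paper's intended argument: the paper derives this theorem immediately from Theorem \ref{theorem2} and the Proposition via the remark ``Since the irregularity of $S$ is zero,'' which is precisely your chain $Z_*=\id-i_*$, finite dimensionality, factorisation through $alb_S$, and $Alb(S)=0$. Nothing is missing; your write-up just makes explicit what the paper leaves as a one-line deduction.
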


\section{The involution acts as -1 on $A_0(S)$}
Since the quotient $S/i$ of the given surface is birational to an Enriques surface or a rational surface, we have that the $i_*$-invariant part in $A_0(S)$ is trivial. Hence $i_*$ acts as $-\id$ on $A_0(S)$. We prove something more in  this section. Namely we are going to prove that if $S$ such that its quotient by the involution is birational to an Elliptic surface, then the involution acts as -1 on $A_0(S)$. For this we follow the Bloch-Kas-Lieberman technique as in \cite{BKL}.

\begin{theorem}
Let $S$ be a surface of general type with $p_g=q=0$. Let $i$ be an involution on $S$, such that $S/i$ is birational to an elliptic surface. Then $i_*$ acts as $-\id$ on $A_0(S)$.
\end{theorem}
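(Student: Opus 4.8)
The plan is to exploit the double cover $\pi\colon S\to S/i$ together with the triviality of $A_0$ of the quotient, which is where the technique of \cite{BKL} enters. The backbone is the correspondence identity $\pi^*\pi_*=\id+i_*$, valid on $\CH_0$ of a finite flat double cover with deck transformation $i$; once one knows that $\pi_*$ annihilates $A_0(S)$ because the target has trivial $A_0$, this identity forces $i_*=-\id$.

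First I would arrange a genuine finite flat double cover between smooth surfaces. The involution $i$ has fixed locus consisting of a curve together with finitely many isolated points, so blowing up $S$ equivariantly at the isolated fixed points produces a smooth surface $\wt S$ on which $i$ acts with purely one-dimensional fixed locus; then $T:=\wt S/i$ is smooth and $\wt\pi\colon\wt S\to T$ is a double cover branched along a smooth divisor, hence finite and flat. Since blowing up a surface at points does not change the group of zero cycles, $A_0(\wt S)\cong A_0(S)$ compatibly with $i_*$, so it suffices to prove $i_*=-\id$ on $A_0(\wt S)$.

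Next I would establish $A_0(T)=0$. The surface $T$ is birational to $S/i$, hence to the elliptic surface of the hypothesis, and since $A_0$ is a birational invariant of smooth projective surfaces, $A_0(T)$ agrees with $A_0$ of the minimal elliptic model $Y$. Because the $i$-invariant holomorphic two-forms on $\wt S$ are exactly those pulled back from $T$, one gets $p_g(T)\le p_g(\wt S)=p_g(S)=0$, so $p_g(Y)=0$; being elliptic, $Y$ has Kodaira dimension at most one, so the Bloch--Kas--Lieberman theorem applies and yields $A_0(Y)=0$, whence $A_0(T)=0$. This is the step in which we follow \cite{BKL}: the vanishing of $A_0$ for a quotient elliptic surface with $p_g=0$ is precisely their theorem for surfaces with $p_g=0$ that are not of general type.

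Finally, for $z\in A_0(\wt S)$ the pushforward $\wt\pi_* z$ lies in $A_0(T)=0$ and so is rationally trivial; applying $\wt\pi^*$ and invoking $\wt\pi^*\wt\pi_*=\id+i_*$ gives $z+i_* z=0$ in $\CH_0(\wt S)$, that is $i_* z=-z$. Transporting back along the blow-down yields $i_*=-\id$ on $A_0(S)$. I expect the principal obstacle to be the careful justification of the correspondence identity in the presence of the ramification divisor, so that $\wt\pi$ is genuinely finite flat and the formula $\wt\pi^*\wt\pi_*=\id+i_*$ holds integrally, together with confirming that the minimal model of the quotient really is an elliptic surface with $p_g=0$ so that \cite{BKL} is applicable.
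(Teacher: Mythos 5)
Your proposal is correct, but it takes a genuinely different route from the paper's proof. The paper threads the involution through the Bloch--Kas--Lieberman mechanism itself: it takes the elliptic fibration $\Sigma\to D$ on the quotient, passes to the associated Jacobian fibration $J\to D$, uses a degree-$n$ multisection to build a dominant morphism $g:S'\to J$ together with a quasi-inverse correspondence $\lambda$ satisfying $\lambda g_*(q+iq)=2n(q+iq)$, proves $T(J)=0$ by showing $J$ is rational (canonical bundle formula plus Castelnuovo's criterion, $P_2=q=0$), and then, for $z$ in the albanese kernel, deduces $2n(z+i_*z)=0$ and removes the factor $2n$ by Roitman's torsion theorem \cite{R2}. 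You instead keep \cite{BKL} as a black box: after the equivariant blow-up you have a finite flat double cover $\wt\pi:\wt S\to T$ of smooth surfaces, the identity $\wt\pi^*\wt\pi_*=\id+i_*$ holds integrally on $\CH_0(\wt S)$ (also on the branch locus, where both sides give $2[x]$), and $A_0(T)=0$ follows from birational invariance of $A_0$ plus the BKL theorem applied to the elliptic model. Your route is shorter, avoids the multisection and the factor $2n$ (hence needs no appeal to Roitman's theorem), and is modular: any input giving $A_0$ of the quotient zero immediately yields $i_*=-\id$; indeed it is essentially the transfer argument the paper itself gestures at for rational and Enriques quotients at the start of its Section 3, extended to elliptic quotients. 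What the paper's longer route buys is self-containedness --- it re-proves the relevant case of BKL (rationality of the Jacobian fibration) rather than citing it, and it exhibits the fibration/transfer mechanism that recurs elsewhere in the paper. One small repair to your write-up: BKL gives $A_0(Y)\cong\Alb(Y)$, not outright vanishing, so you must also check $q(Y)=0$; this follows exactly as for $p_g$, since pullback of $1$-forms along $\wt\pi$ is injective, giving $q(T)\le q(S)=0$, and $q$ is a birational invariant.
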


First consider the pencil of elliptic curves on the  surface $\Sigma$, that is a regular morphism from $\Sigma\to D$, where $D$ is a smooth projective curve.  Let $f$ denote the morphism from $S'$ to $\Sigma$. Then consider the general fiber of the elliptic fibration $\Sigma\to D$ and further pull it back to $S'$ via $f$. Let us denote $f^{-1}(E)$ by $C$ for a general elliptic curve $E$ of the fibration $\Sigma\to D$.  Consider the Jacobian fibration $J\to D$ corresponding to $\Sigma\to D$. Now fix a multisection $Y$ of $\Sigma$ and let $\pi$ be the morphism from $J\to D$. Let us have
$$Y\cap \pi^{-1}(t)=\sum_{i=1}^n p_i(t)$$
for $t$ in $D$. We map $\Sigma$ to $J\to D$ by
$$q\mapsto(q,\cdots,q)\mapsto nf(q)-\sum_{i=1}^n p_i(\pi(q))$$
so  we have a dominant morphism $g$ from $S'$ to $J$.

\begin{lemma}
\label{prop3}
Let $T(J)$ denote the albanese kernel for $J$. If $T(J)=0$, then the involution $i$ acts as $-1$ on the  albanese kernel $T(S)$ for $S$.
\end{lemma}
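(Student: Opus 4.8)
The plan is to prove that $(\id_* + i_*)z = 0$ for every $z \in T(S)$. Since $\tau\colon S' \to S$ is a blow-up at points, $\tau_*$ identifies $T(S')$ with $T(S)$ and is compatible with the lifted involution, so it is enough to work on $S'$ and show $z + i_* z = 0$ in $A_0(S')$ for $z$ of degree $0$. The first step is to recall that for the quotient double cover $f\colon S' \to \Sigma$ with Galois involution $i$ the projection formula gives $f^* f_* = \id_* + i_*$ on zero-cycles, so that $z + i_* z = f^*(f_* z)$. Hence it suffices to prove that $f_* z = 0$ in $A_0(\Sigma)$; equivalently, since $q(\Sigma)=0$ forces $\Alb(\Sigma) = 0$ and $f_* z$ has degree $0$, it suffices to prove $T(\Sigma) = 0$.

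Next I would bring in the Jacobian fibration through the factorisation $g = \phi \circ f$, where $\phi\colon \Sigma \to J$ is the morphism over $D$ sending a point $e$ of the fibre $E_t$ to the class $[\,n e - \sum_i p_i(t)\,] \in J_t$. On a general fibre $\phi$ restricts to multiplication by $n$ followed by a translation $E_t \to J_t$, hence to an isogeny of degree $n^2$; consequently $\phi^* \phi_* = n^2 \cdot \id$ on $\Pic^0(E_\eta)\otimes \QQ$ of the generic fibre, and globalising this over $D \cong \PR^1$ gives $\phi^* \phi_* = n^2 \cdot \id$ on $T(\Sigma)$ modulo torsion. Because $T(J) = 0$ by hypothesis and $g_* = \phi_* f_*$, we have $\phi_*(f_* z) = g_* z \in T(J) = 0$; applying $\phi^*$ yields $n^2\, f_* z = 0$ modulo torsion, so $f_* z$ is a torsion class in $T(\Sigma)$. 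By Roitman's torsion theorem \cite{R2} the torsion of $A_0(\Sigma)$ injects into $\Alb(\Sigma) = 0$, whence $f_* z = 0$ and therefore $z + i_* z = f^*(f_* z) = 0$, which is exactly the assertion that $i_*$ acts as $-\id$ on $T(S)$.

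The main obstacle is the globalisation in the middle step: passing from the clean fibrewise identity $\phi^*\phi_* = n^2$ on the generic elliptic fibre to a controlled identity on $T(\Sigma)$ of the whole surface. This requires comparing $A_0(\Sigma)$ with the zero-cycles of the generic fibre $E_\eta$ over $k(D)$ and accounting for the fibral contributions coming from the degenerate fibres, from the branch divisor of $f$, and from the torsion section $\sum_{T \in E_t[n]} T$ produced by $\phi^*\phi_*$; organising these discrepancies so that they are absorbed into the torsion subgroup, which Roitman's theorem then annihilates, is the technical heart of the Bloch--Kas--Lieberman comparison \cite{BKL} and is where the argument must be carried out with care.
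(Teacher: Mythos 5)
Your opening reduction is correct and is in fact a cleaner packaging than the paper's own argument: for the double cover $f\colon S'\to\Sigma$ with Galois involution $i$ one has $f^*f_*=\id_*+i_*$ on zero-cycles (also at ramification points, where $f^*f_*(x)=2x=x+i(x)$), so, since $q(\Sigma)=0$ forces $\Alb(\Sigma)=0$, the lemma reduces to showing $f_*z=0$ in $A_0(\Sigma)$. The paper never descends to $\Sigma$: it works on $S'$ with the composite $g=\phi\circ f$ and a hand-built multivalued inverse $\lambda$ (in effect $f^*$ composed with a fibrewise quasi-inverse of $\phi$), verifies the identity $2n(q+iq)=\lambda g_*(q+iq)$ point by point on fibres, and then concludes from $T(J)=0$ and Roitman's theorem. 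So your route and the paper's are the same Bloch--Kas--Lieberman scheme, organised differently; yours separates the descent to the quotient from the fibration argument, which is a genuine (and welcome) simplification of the bookkeeping.

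The gap is your middle step, and you flag it yourself: the claim that the fibrewise identity ``globalises'' to $\phi^*\phi_*=n^2\cdot\id$ on $T(\Sigma)$ modulo torsion is asserted but not proved, and you describe precisely this point as ``the technical heart'' to be ``carried out with care.'' That step \emph{is} the content of the lemma, so as written the proposal stops exactly where the work begins; moreover your formulation of what must be done is off-target, since zero-cycles on $\Sigma$ do not restrict to zero-cycles on the generic fibre $E_\eta$ (closed points of $E_\eta$ are multisections, i.e.\ curves on $\Sigma$), so comparing $T(\Sigma)$ with $\Pic^0(E_\eta)$ is not the right move. The efficient repair works with closed fibres, as the paper's $\lambda$-computation implicitly does: for a point $x$ on a smooth fibre $E_t$ one has $\phi^{-1}(\phi(x))=\{x+\epsilon:\epsilon\in E_t[n]\}$, and the cycle $\sum_\epsilon(x+\epsilon)-n^2x$ has degree zero and trivial Abel--Jacobi image on $E_t$ (the $n$-torsion points sum to zero in the group law), hence is rationally equivalent to zero already on the fibre; thus $\phi^*\phi_*(x)=n^2x$ in $\CH_0(\Sigma)$ for every such point, with no ``modulo torsion'' and no decomposition into fibrewise-degree-zero pieces. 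By additivity the identity holds for all cycles supported on smooth fibres, and the only remaining globalisation issue (for you, and tacitly for the paper as well) is to move a degree-zero cycle off the finitely many singular fibres by rational equivalence, which is a standard moving argument on an auxiliary ample curve. After that, $n^2f_*z=\phi^*\phi_*f_*z=\phi^*(g_*z)=0$ because $g_*z\in T(J)=0$, and Roitman's theorem \cite{R2} (torsion of $A_0(\Sigma)$ injects into $\Alb(\Sigma)=0$) gives $f_*z=0$, hence $z+i_*z=f^*f_*z=0$. With that supplement your argument is complete; alternatively, since $\Sigma$ has $p_g=q=0$ and is not of general type, your step two is exactly the theorem of \cite{BKL} and could simply be cited, though that would make the hypothesis $T(J)=0$ superfluous rather than used.
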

\begin{proof}
Now want to understand the quasi-inverse of of $g$. Let $\alpha$ belong to $J$ that lies over $t\in D$. So there is a unique point in $q_{i}(t)$ on $E$ such that $q_i(t)-p_i(t)$ is rationally equivalent to $\alpha$. Now $g^{-1}(q_i(t))=\{q_i'(t),q_i''(t)\}$. So we can define $\lambda$ to be
$$\alpha\mapsto \sum_i (q_i'(t)+q_i''(t)).$$
Now we check that
$$g_*\lambda(\alpha)=g_*(\sum_i q_i'(t)+q_i''(t))$$
which is
$$=\sum_i g_*(q_i'(t))+g_*(q_i''(t))$$
$$=\sum_i 2n(q_i(t)- p_i(t))=2n^2\alpha\;.$$

Let $q$ be a point supported on $C_t=f^{-1}\pi^{-1}(t)$ where $\pi:J\to D$. Then we prove that $2n(q+iq)$ is rationally equivalent to zero on $S'$. For that let us compute $\lambda g_*(q+iq)$. So we have by definition of
$$g_*(q+iq)=nf_{*}(q+iq)-2\sum_{i=1}^n (p_i(t))\;,$$
let $f_t(q)=q'$.
Then
$$\lambda g_*(q+iq)=\lambda(2nq')-\lambda(2\sum_{i=1}^n (p_{i}(t)))$$
which can be re-written as
$$2\sum_{i=1}^n \lambda(q'-(p_{i}(t)))\;.$$
Now $\lambda(q'-(p_{i}(t)))=(q+iq)$
so putting that in the above we get that
$$2n(q+iq)=\lambda g_*(q+iq)\;.$$
Therefore for any zero cycle $z$ of degree zero on $S$, we have
$$2n(z+iz)-\lambda g_*(z+iz)=0\;.$$
Suppose $z$ vanishes on $Alb(S')$, hence $iz$ vanishes on $Alb(S')$, then we have $g_*(z+iz)$ vanishing on $Alb(J)$. If $T(J)$ is zero then we can conclude that $g_*(z+iz)$ is rationally equivalent to $0$ on $J$. Composing with $\lambda$ we get that $2n(z+iz)=0$. By Roitman's theorem \cite{R2}[theorem 3.1] it will follow that $z+iz=0$, so we have $iz=-z$.
\end{proof}

Now we repeat the proof  that $T(J)=0$ by showing that $J$ is rational following \cite{BKL}.

\begin{lemma}
$$T(J)=0$$
\end{lemma}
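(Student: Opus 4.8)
The plan is to identify $J$ as a rational surface, after which the vanishing of its albanese kernel is automatic: any rational surface $X$ satisfies $\CH_0(X)=\ZZ$, so in particular $T(X)=0$. Since $J\to D$ is the relative Jacobian of the elliptic fibration $\Sigma\to D$, it comes for free as a relatively minimal elliptic surface equipped with a section (the zero section), and the whole argument reduces to pinning down its numerical invariants and then quoting the classification of elliptic surfaces, exactly in the spirit of \cite{BKL}.

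First I would record the invariants of $\Sigma$. As $\Sigma$ is birational to $S'/i$, where $S'$ is a blow-up of $S$, and since $p_g$, $q$ and $\chi(\bcO)$ are birational invariants of smooth projective surfaces, we have $p_g(\Sigma)=\dim H^0(\Sigma,\Omega^2)^{i}=0$, the invariant $2$-forms vanishing because already $p_g(S)=0$, and $q(\Sigma)\le q(S')=q(S)=0$. Hence $\chi(\bcO_\Sigma)=1-q(\Sigma)+p_g(\Sigma)=1$. Next I would transfer these to $J$. By Kodaira's theory of elliptic surfaces the relative Jacobian $J\to D$ has the same functional and homological invariants as $\Sigma\to D$, whence $\chi(\bcO_J)=\chi(\bcO_\Sigma)=1$. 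The pullback $H^1(D,\QQ)\hookrightarrow H^1(\Sigma,\QQ)$ is injective, so $g(D)\le q(\Sigma)=0$ and therefore $D\cong\PR^1$. Because $\chi(\bcO_J)=1$ forces the topological Euler number $e(J)=12>0$, the fibration $J\to\PR^1$ carries singular fibres and is not a product; together with the section this gives $q(J)=g(\PR^1)=0$, and then $p_g(J)=\chi(\bcO_J)-1+q(J)=0$.

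Finally, a relatively minimal elliptic surface with a section over $\PR^1$ having $\chi(\bcO)=1$ is a rational (extremal-type) elliptic surface; equivalently $J$ has $p_g=q=0$ and Kodaira dimension at most one, so $T(J)=0$, either by direct appeal to rationality or, uniformly, by the Bloch--Kas--Lieberman theorem \cite{BKL} for surfaces of Kodaira dimension $\le 1$ with $p_g=0$. This is the conclusion of the lemma.

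The hard part will be the comparison $\chi(\bcO_J)=\chi(\bcO_\Sigma)$ and the control of $q(J)$. One must verify that passing from $\Sigma$ to its relative Jacobian preserves the Euler characteristic despite the fact that $\Sigma$ may carry multiple fibres while $J$ does not: this is precisely where Kodaira's equality of the singular-fibre data (the functional and homological invariants) between $\Sigma$ and $J$ is invoked, the multiple fibres of $\Sigma$ contributing nothing to the Euler number and hence nothing to $\chi(\bcO)$. One must also rule out any excess irregularity of $J$ coming from isotriviality, which is exactly what the presence of singular fibres (guaranteed by $\chi(\bcO_J)=1$) excludes, forcing every global $1$-form on $J$ to descend from the base.
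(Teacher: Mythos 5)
Your proposal is correct, but it reaches the numerical invariants of $J$ by a genuinely different mechanism than the paper. The paper stays entirely inside the Bloch--Kas--Lieberman framework: the vanishings $q(J)=0$ and $p_g(J)=0$ that its proof invokes are inherited from the dominant map $g:S'\to J$ constructed just before the lemma (global $1$- and $2$-forms pull back injectively under a dominant generically finite map of smooth projective surfaces), and then the canonical bundle formula is run explicitly --- $K_J=\pi^*(K_J.\sigma(D))$, with the degree computed by adjunction through the normal bundle $N$ of the section --- giving $P_r(J)=h^0(D,r(K_D-N))=0$ for all $r\geq 1$ because $\deg(K_D-N)<0$ on $D=\PR^1$, whence $J$ is rational by Castelnuovo's criterion ($q=P_2=0$) and $T(J)=0$. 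You never use $g$ at all: you transfer invariants from $\Sigma$ to $J$ via Kodaira's theorem that a fibration and its relative Jacobian share functional and homological invariants, so that their singular fibres (with ${}_mI_k$ replaced by $I_k$) have equal Euler numbers, giving $e(J)=e(\Sigma)$ and then $\chi(\bcO_J)=\chi(\bcO_\Sigma)=1$ by Noether's formula with $K^2=0$; from this you recover $e(J)=12>0$, the presence of singular fibres, $q(J)=0$, $p_g(J)=0$, and quote the classification of relatively minimal elliptic surfaces with a section over $\PR^1$ with $\chi(\bcO)=1$ as rational. The endgame is morally identical (that classification fact is proved by exactly the paper's canonical-bundle computation), but the key input differs: the paper's route is more elementary given that $g$ is already on hand for the rest of the BKL argument, while yours needs the deeper comparison $\chi(\bcO_J)=\chi(\bcO_\Sigma)$; in exchange, your argument is independent of the multisection construction and yields $T(J)=0$ for the Jacobian of any elliptic fibration on a surface with $p_g=q=0$. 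Two small points you should nail down: arrange $\Sigma\to D$ to be relatively minimal before invoking Kodaira's comparison and Noether's formula (blow down vertical $(-1)$-curves; this changes neither $\chi(\bcO)$ nor the Jacobian), and note that relative minimality of $J\to\PR^1$, which is automatic for the Jacobian fibration, is what justifies $K_J^2=0$ in your Euler-number step.
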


\begin{proof}
We have $p:J\to D$ and a section $\sigma:D\to J$ by the very definition of a Jacobian fibration. First we observe that $D=\PR^1$ as $q(J)=0$ and $Alb(J)$ maps onto $Alb(D)$. Then by \cite{Sha}[VII, section 3] we have
$$K_J=\pi^*(K_J.\sigma(D))$$
also by the adjunction formula we have
$$K_J.\sigma(D)=K_{D}-N$$
where $N$ is the normal bundle to $D$ in $J$. Thus we have
$$h^0(J,rK_J)=h^0(J,\pi^*(r(K_{D}-N)))=h^0(D,r(K_{D}-N))$$
since $p_g=0$ we have that
$$h^0(D,K_{D}-N)=0\;.$$
Since $D$ is $\PR^1$, the degree of $K_{D}-N$ is negative. Therefore by Riemann-Roch we have $h^0(D,r(K_{D}-N))=0$ for $r>0$. Therefore $P_r(J)=0$ for all $r\geq 1$, in particular $P_2=0$, so $J$ is rational. Therefore $T(J)=0$.
\end{proof}

The case when $\Sigma$ is rational is similar. First suppose that the rational surface contains a pencil of elliptic curves on it. Let $E_{\eta}$ be  the generic fiber of this pencil and pull it back to $S'$, call it $C_{\eta}$. Then spreading out the curve $C_{\eta},E_{\eta}$, over a smooth projective curve $D'$, we get that $S'$ admits a map to an elliptic surface. Then consider a multisection of this elliptic surface, and a dominant map from $S'$ to it, which ensures that the geometric genus and irregularity is zero for the elliptic surface. Then we proceed as in the proof of \ref{prop3}.

\section{Application of the above results}
Consider  a surface of general type of Numerical Godeaux type with $p_g=q=0$ having an involution $i$ on it. Suppose that the quotient by the involution is birational to a$ \PR^2$ and the fixed locus consists of a unique irreducible component of genus two. Then we have  that $i_*$ acts as $\id$ and $-\id$.  So we have that $2id=0$ on $A_0(S)$. This says that by the Roitman's torsion theorem that $id=0$ on $A_0(S)$. So we get the following theorem.

\begin{theorem}
Let $S$ be a numerical Godeaux surface over the field of complex numbers with an involution $i$ on it such that the quotient of $S$ by the involution is a rational surface with a unique irreducible component of genus $2$ of the branch locus. Then the Bloch's conjecture holds for this surface $S$.
\end{theorem}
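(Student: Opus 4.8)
The plan is to deduce Bloch's conjecture for $S$ — namely that the degree map $\CH_0(S)\to\ZZ$ is an isomorphism — entirely by combining the two actions of the involution on $A_0(S)$ that the preceding sections supply, and then invoking Roitman's torsion theorem. Since $S$ is a numerical Godeaux surface we have $p_g=q=0$; in particular $\Alb(S)=0$, and because $S$ is connected every zero cycle of degree zero is algebraically trivial, so $\CH_0(S)_{\deg 0}=A_0(S)$. Thus it suffices to prove that $A_0(S)=0$.

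First I would record that $i_*$ acts as the identity on $A_0(S)$. This is exactly the output of the finite-dimensionality argument: applying Theorem~\ref{theorem2} to the correspondence $Z=\Delta_S-\Gr(i)$ shows that the image of $Z_*$ is finite dimensional in the sense of Roitman, so by the Proposition of Section~2 the map $Z_*$ factors through $alb_S\colon A_0(S)\to\Alb(S)$. As $q(S)=0$ forces $\Alb(S)=0$, this gives $Z_*=0$ on $A_0(S)$, that is $(\Delta_S)_*=\Gr(i)_*$, i.e. $i_*=\id$ on $A_0(S)$.

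Next I would invoke the rationality of the quotient to obtain $i_*=-\id$ on $A_0(S)$, which is established at the start of Section~3. Concretely, writing $\pi\colon S\to\Sigma=S/i$ for the quotient map, one has the identity $\pi^*\pi_*=\id+i_*$ on zero cycles of a degree-two quotient; for $z\in A_0(S)$ the class $\pi_*(z)$ lies in $A_0(\Sigma)$, and since $\Sigma$ is rational $A_0(\Sigma)=0$, whence $(\id+i_*)(z)=\pi^*\pi_*(z)=0$. Therefore $i_*=-\id$ on $A_0(S)$. (The quotient singularities coming from the isolated fixed points and the $-2$ rational curves are harmless here, since $A_0$ is a birational invariant of smooth projective surfaces and the analysis may be carried out on the resolutions $S'$, $S''$ exactly as in the proof of Theorem~\ref{theorem2}.)

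Combining the two identities yields $2\,\id=0$ on $A_0(S)$, so $A_0(S)$ is annihilated by $2$, i.e. it is a $2$-torsion group. By Roitman's torsion theorem the torsion subgroup of $A_0(S)$ injects into $\Alb(S)=0$, and hence $A_0(S)=0$; consequently $\CH_0(S)\cong\ZZ$, which is Bloch's conjecture for $S$. I expect no genuine obstacle at this final stage: all the real difficulty has already been absorbed into the finite-dimensionality Theorem~\ref{theorem2} and the étale-monodromy Proposition, so that this statement is a formal synthesis. The only points deserving mild care are the verification of the projector relation $\pi^*\pi_*=\id+i_*$ for the degree-two quotient and the triviality $A_0(\Sigma)=0$ of the rational quotient, both of which are standard.
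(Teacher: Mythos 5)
Your proposal is correct and follows essentially the same route as the paper: it combines Theorem~\ref{theorem2} (finite dimensionality of $(\Delta_S-\Gr(i))_*$, hence $i_*=\id$ on $A_0(S)$ via the Proposition of Section~2 and $q(S)=0$) with the triviality of the $i_*$-invariant part coming from the rational quotient (hence $i_*=-\id$), and then applies Roitman's torsion theorem to the resulting $2$-torsion group. The only difference is cosmetic: you spell out the projector identity $\pi^*\pi_*=\id+i_*$ and the passage to resolutions, which the paper leaves implicit.
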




Consider a numerical Campedelli surface with an involution. They are classified in the following way by \cite{CLP}. If the bi-canonical map is composed by the involution, meaning that the bicanonical map factors through the quotient then the quotient is either birational to an Enriques surface or a rational surface.  The rational case the surfaces are with effective anticanonical divisors. Some of them has all the components of the branch locus being rational. In this cases the Bloch's conjecture follow from our method. In the other case when the bicanonical map is not composed by an involution then the quotient is either birational to an Enriques surface or a rational surface and it is not of general type, or it is a numercial Godeaux surface or an elliptic surface not of general type. For the case when the quotient is  a rational surface or it is an elliptic surface (all not of general type), we have the Bloch's conjecture for the surface. Here we use the fact that the fixed locus consists of $-4$ rational curves.  The case of the proper elliptic surfaces requires that the elliptic fibration has a section. So we have the following theorems. 
\begin{theorem}
Let $S$ be a numerical Campedelli surface with an involution $i$. Suppose that the bi-canonical map is composed with the involution and the quotient is rational. Also suppose that the fixed locus of the involution consists only $-2$ rational curves. Then the Bloch's conjecture holds for the surface.
\end{theorem}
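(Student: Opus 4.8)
The plan is to reproduce, for numerical Campedelli surfaces, the three-step architecture already established for numerical Godeaux surfaces: first prove that $i_*$ acts as the identity on $A_0(S)$, then that it acts as $-\id$, and finally combine the two identities with Roitman's torsion theorem. A numerical Campedelli surface has $p_g=q=0$ and $K_S^2=2$, so $\Alb(S)=0$; this is precisely what makes both halves of the argument run. Once I have $i_*=\id$ and $i_*=-\id$ on $A_0(S)$, I obtain $2\,\id=0$, and since $q=0$ the group $A_0(S)$ is torsion-free by \cite{R2}, forcing $A_0(S)=0$, which is Bloch's conjecture.

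The substance is the finite-dimensionality of the image of $(\Delta_S-\Gr(i))_*$, the analogue of Theorem~\ref{theorem2}. I would carry out the same construction: blow up any isolated fixed points to obtain $S'$ with its induced involution, pass to the quotient $\Sigma=S'/i$ (a rational desingularization of $S/i$), and then blow down all the rational curves in the fixed locus to reach $S''$ with quotient $\Sigma'$. The decisive simplification relative to the Godeaux case is that the fixed locus now consists \emph{only} of $(-2)$-rational curves: after blowing all of them down, the branch divisor of $S''\to\Sigma'$ is supported at finitely many points, so for a general $C$ in a very ample $|L|$ on $\Sigma'$ the induced double cover $\wt C\to C$ is unramified. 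Hence the associated Prym variety has dimension $g-1+m$, with $g$ the genus of $C$ and $m$ the number of blown-down points, and there is \emph{no} term of the type $\Gamma.L/2$ that complicated the genus-$2$ computation. The Prym fibration $\bcP(\wt{\bcC}/\bcC)$ then has dimension $2g+n-1+m$ over the $(g+n)$-dimensional space $H^0(\Sigma',L)$, while $S''^{g+n}$ has dimension $2g+2n$; so the tautological map $S''^{g+n}\to\bcP(\wt{\bcC}/\bcC)$ has fibers of dimension $n-m+1$, which is positive once $n$ is taken large, and here $n$ can be made arbitrarily large because the rational surfaces in question carry an effective anticanonical divisor \cite{CCL}.

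From a positive-dimensional fiber $F_s$ I would reduce supports exactly as in the Godeaux proof. By \cite{Bo} the system $|5K_S|$ is very ample, and the curves in $|\,5K_{S'}-\sum_i E_i\,|=|\,\pi^*(5K_S)\,|$ have the fixed genus $15K_S^2+1=31$ (using $K_S^2=2$). Repeatedly intersecting $F_s$ with the ample divisor $\sum_i \pr_i^{-1}(\pi'(D))$ for $D\in|\,\pi^*(5K_S)\,|$ forces the cycle carried by $F_s$ into $S''^{m_0}$ with $m_0<g+n$; the induction on symmetric powers then gives finite-dimensional image for $(\Delta_{S''}-\Gr(i))_*$, which descends to $S$ since the image of $(\Delta_{S''}-\Gr(i))_*$ surjects onto that of $(\Delta_{S'}-\Gr(i))_*$ and $A_0(S')\cong A_0(S)$. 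The factorization Proposition established earlier, together with $q=0$, now yields $i_*=\id$ on $A_0(S)$. Since $S/i$ is rational its $A_0$ is trivial, so the $i_*$-invariant part of $A_0(S)$ vanishes and $i_*=-\id$; the two identities and \cite{R2} finish the proof.

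The step I expect to be the real obstacle is making the Prym dimension count rigorous after the $(-2)$-curves are contracted. Contracting a fixed $(-2)$-curve creates a rational double point on both $S''$ and $\Sigma'$, so the cover is only étale away from these nodes; I must verify that a general $C\in|L|$ avoids the singular points, that $\wt C$ is then smooth and connected—the Hodge-index argument from the Godeaux case transfers verbatim—and, most delicately, that the nodal contributions are correctly accounted for in the correction term $m$ so that the ramification term really is absent and the fiber dimension $n-m+1$ stays positive for large $n$. Once this bookkeeping is settled, every subsequent step is a word-for-word repetition of the numerical Godeaux argument.
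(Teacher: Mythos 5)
Your proposal is correct and follows essentially the same route as the paper: the paper's own proof is precisely the observation that one argues as in Theorem~\ref{theorem2}, blowing down the $(-2)$-curves so that the ramification locus becomes a finite set of points, which removes the $\Gamma.L/2$ term from the Prym dimension count, and then concludes via the rationality of the quotient and Roitman's torsion theorem. Your additional care about the rational double points created by the contraction is a reasonable refinement, but it is the same argument.
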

\begin{proof}
The proof follows arguing as in \ref{theorem2}. We have to blow down the $-2$ curves to have the ramification locus disjoint from an ample curve on the relevant surface. Since the ramification locus on the original surface are only $-2$ curves, the ramification locus on the blow-down consists of only points.
\end{proof}

\begin{theorem}
Let $S$ be a numerical Campedelli surface with an involution $i$. Suppose that the bi-canonical map is not composed with the involution and the quotient is rational or elliptic with a section. Then the Bloch's conjecture holds for $S$.
\end{theorem}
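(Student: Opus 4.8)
The plan is to follow the two-step strategy of the paper: show that $i_*$ acts simultaneously as $+\id$ and as $-\id$ on $A_0(S)$, so that $2\,\id=0$ there, and then invoke Roitman's torsion theorem \cite{R2}. Since a numerical Campedelli surface has $p_g=q=0$, its Albanese variety is trivial, so $A_0(S)=T(S)$ and Roitman's theorem gives that $A_0(S)$ is torsion free; combined with $2\,\id=0$ this forces $A_0(S)=0$, which is Bloch's conjecture.

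First I would prove $i_*=\id$ on $A_0(S)$ by showing that the correspondence $\Delta_S-\Gr(i)$ has finite-dimensional image and then applying the Proposition of Section 2 (factorisation through $alb_S$) together with $q(S)=0$. The argument is that of Theorem \ref{theorem2}, and it is in fact cleaner here: by hypothesis the fixed locus of $i$ consists only of $-4$ rational curves (together with isolated points), with no higher-genus component. As in the preceding Campedelli theorem I would blow up the isolated fixed points and then blow down the fixed rational curves, so that on the resulting quotient the branch locus of the double cover becomes a finite set of points. Hence for a general member $C$ of a large ample linear system $|L|$ the induced double cover $\wt{C}\to C$ is \emph{unramified} (and connected, by the Hodge index argument of Theorem \ref{theorem2}), so its Prym variety has dimension $g(C)-1$ with none of the ramification correction that appeared in the Godeaux case. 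The dimension comparison between the symmetric product and the Prym fibration $\bcP(\wt{\bcC}/\bcC)$ then proceeds exactly as before, the fibres being positive dimensional once $L$ is taken of sufficiently large degree; intersecting these fibres with a fixed ample pluricanonical divisor of bounded genus (here $K_S^2=2$, so the pluricanonical genus is a fixed constant) and iterating the support-reduction argument verbatim yields finite dimensionality, hence $i_*=\id$.

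Next I would prove $i_*=-\id$, according to the two cases of the hypothesis. If $S/i$ is rational then $A_0(S/i)=0$, so the $i_*$-invariant part of $A_0(S)$ is trivial and, $i_*$ being an involution, $i_*=-\id$, exactly as at the start of Section 3. If instead $S/i$ is a proper elliptic surface with a section, I would run the Jacobian-fibration argument of Lemma \ref{prop3}. Here the section of the elliptic fibration $\Sigma\to D$ is essential: it identifies each smooth fibre with the corresponding fibre of the relative Jacobian $J\to D$, so that the dominant map $g:S'\to J$ and its quasi-inverse $\lambda$ are well defined. The companion lemma then shows that $J$ is rational — since $q=0$ one has $D=\PR^1$, and $K_J=\pi^*(K_D-N)$ has negative degree because $p_g=0$, so all plurigenera of $J$ vanish — whence $T(J)=0$, and Lemma \ref{prop3} gives $i_*=-\id$ on $T(S)=A_0(S)$.

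Combining the two actions gives $2\,\id=0$ on $A_0(S)$; since $Alb(S)=0$, Roitman's torsion theorem shows $A_0(S)$ is trivial. I expect the main obstacle to be the finite-dimensionality step, specifically the verification that the fibres of the map to the Prym fibration are genuinely positive dimensional. In the rational case this is driven by the effectivity of the anticanonical divisor of the quotient, and for the proper-elliptic quotient this positivity is not automatic; one must use precisely the blow-down of the fixed $-4$ rational curves to make the relevant divisor pair positively with $L$, and check that the unramified double cover really does lower the Prym dimension by exactly one. The section hypothesis in the elliptic case is the second delicate point, as without it the fibres of $\Sigma\to D$ are only torsors over those of $J$ and the map $g$ cannot be defined.
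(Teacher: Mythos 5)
Your proposal is correct and takes essentially the same route as the paper: the paper's own proof is just two sentences (``argue as in Theorem \ref{theorem2}, blowing down the fixed locus, which consists of $-4$ rational curves''), relying on the finite-dimensionality argument for $i_*=\id$, the rational-quotient or Jacobian-fibration argument of Section 3 for $i_*=-\id$, and Roitman's torsion theorem to conclude. Your write-up simply makes explicit what the paper leaves implicit, including the observation that after the blow-down the double cover of a general member of $|L|$ is unramified so the Prym has dimension $g-1$, and your reading of where the section hypothesis enters the elliptic case is consistent with the paper's (itself rather terse) treatment.
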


\begin{proof}
Proof is similar as in \ref{theorem2}. We blow down the fixed locus of the involution which consists of only $-4$ rational curves.
\end{proof}


\end{document}